\documentclass[11pt]{amsart}
\usepackage{mathrsfs,latexsym,amsfonts,amssymb}
\setcounter{page}{1} \setlength{\textwidth}{14.6cm}
\setlength{\textheight}{22.5cm} \setlength{\evensidemargin}{0.8cm}
\setlength{\oddsidemargin}{0.8cm} \setlength{\topmargin}{0.8cm}

\newtheorem{theorem}{Theorem}[section]
\newtheorem{lemma}[theorem]{Lemma}

\theoremstyle{definition}

\begin{document}
\title[An affirmative answer to Yamada's Conjecture]
{An affirmative answer to Yamada's Conjecture}

\author{Chuan Liu}
\address{(Chuan Liu): Department of Mathematics,
Ohio University Zanesville Campus, Zanesville, OH 43701, USA}
\email{liuc1@ohio.edu}
\author{Fucai Lin}
\address{(Fucai Lin): School of mathematics and statistics,
Minnan Normal University, Zhangzhou 363000, P. R. China}
\email{linfucai2008@aliyun.com; linfucai@mnnu.edu.cn}
\thanks{The second author is supported by the NSFC (No. 11571158), the Natural Science Foundation of Fujian Province (No. 2017J01405) of China, the Program for New Century Excellent Talents in Fujian Province University, the Project for Education Reform of Fujian Education Department (No. FBJG20170182), the Institute of Meteorological Big Data-Digital Fujian and Fujian Key Laboratory of Data Science and Statistics.}

\keywords{Free topological groups; Fr\'echet-Urysohn; compact spaces; metrizable}
\subjclass[2010]{primary 22A30; secondary 54D10; 54E99; 54H99}

\begin{abstract}
In this paper, we give an affirmative answer to Yamada's Conjecture
on free topological groups, which was posed in [K. Yamada, {\it
Fr\'echet-Urysohn spaces in free topological groups}, Proc. Amer.
Math. Soc., {\bf 130}(2002), 2461--2469.].
\end{abstract}

\maketitle

\section{Introduction and Preliminaries}
Throughout this paper, all topological spaces are assumed to be
Tychonoff, unless explicitly stated otherwise. Given a space $X$,
let $F(X)$ and $A(X)$ be the free topological group and free Abelian topological group over $X$ in the sense of
Markov respectively. For every $n\in\mathbb{N}$, let $F_{n}(X)$ and $A_{n}(X)$ denote the
subspaces of $F(X)$ and $A(X)$ respectively that consists of words of reduced length at most
$n$ with respect to the free basis $X$. A space $X$ is said to be
{\it Fr$\acute{e}$chet-Urysohn} if, for each $x\in
\overline{A}\subset X$, there exists a sequence $\{x_{n}\}$ such
that $\{x_{n}\}$ converges to $x$ and $\{x_{n}:
n\in\mathbb{N}\}\subset A$.

The free topological group $F(X)$ and the free abelian topological
group $A(X)$ over a Tychonoff space $X$ were introduced by Markov
\cite{MA1945} and intensively studied over the last half-century,
see for example \cite{AOP1989,LLL,LL, LLLL,U1991,Y1993,Y1997,Y1998,Y2016}.

\smallskip
In \cite{Y2002}, Yamada proved the following two theorems.

\begin{theorem}\cite{Y2002}
Let $X$ be a metrizable space. Then $F_5(X)$ is Fr\'echet-Urysohn if
and only if $X$ is discrete or compact.
\end{theorem}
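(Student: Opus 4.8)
The plan is to prove the two implications separately, the forward (sufficiency) direction being soft and the reverse (necessity) direction carrying essentially all of the difficulty.

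For sufficiency, write $\widetilde{X}=X\cup X^{-1}\cup\{e\}$ and recall the natural multiplication map $i_n\colon \widetilde{X}^{\,n}\to F_n(X)$, which is a continuous surjection and is moreover known to be a quotient map. If $X$ is discrete then $\widetilde{X}^{\,5}$ is discrete, and a quotient of a discrete space is discrete, so $F_5(X)$ is discrete and hence trivially Fr\'echet--Urysohn. If $X$ is compact metrizable then $\widetilde{X}^{\,5}$ is compact metrizable, and since $F_5(X)$ is Hausdorff (being a subspace of the Tychonoff group $F(X)$), the map $i_5$ exhibits $F_5(X)$ as a continuous Hausdorff image of a compact metric space; such an image is itself compact metrizable (for instance because $C(F_5(X))$ embeds isometrically into the separable space $C(\widetilde{X}^{\,5})$, forcing $F_5(X)$ to have countable weight). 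A metrizable space is Fr\'echet--Urysohn, so $F_5(X)$ is Fr\'echet--Urysohn, settling this direction.

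For necessity I would argue the contrapositive: assuming $X$ is metrizable but neither discrete nor compact, I would produce a set $A\subseteq F_5(X)$ and a point $g\in\overline A$ admitting no sequence from $A$ converging to it. Non-discreteness yields a nontrivial convergent sequence $a_n\to a$ with the $a_n$ distinct and $a_n\neq a$; non-compactness (equivalently, failure of sequential compactness in the metrizable setting) yields an infinite closed discrete subset $\{b_n\}$, which we may take disjoint from $\{a_n\}\cup\{a\}$. The key construction is the doubly indexed family of reduced words of length exactly five,
\[ w_{n,m}=b_n\,a_m\,a^{-1}\,b_n^{-1}\,a_n, \qquad A=\{w_{n,m}:n,m\in\mathbb{N}\}. \]
Continuity of the group operations gives, for each fixed $n$, that $w_{n,m}\to b_n\,a\,a^{-1}\,b_n^{-1}\,a_n=a_n$ as $m\to\infty$; thus the $n$-th column converges to $p_n:=a_n$, the $p_n$ are distinct, and since $a_n\to a$ the column limits satisfy $p_n\to a$. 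The family therefore carries a two-level (Arens-type) structure whose intended obstruction point is $g:=a$, a word of length one lying far below the length-five words of $A$.

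Two facts then remain. First, $a\in\overline A$: each $p_n=a_n$ is a limit of a tail of the $n$-th column, hence $p_n\in\overline A$, and $a\in\overline{\{p_n\}}\subseteq\overline{\overline A}=\overline A$. Second, and this is the crux, no sequence from $A$ converges to $a$. Given a putative $w_{n_k,m_k}\to a$, one first reduces to $n_k\to\infty$ and $m_k\to\infty$: if the $n_k$ take finitely many values the sequence accumulates only among the finitely many $p_n=a_n\neq a$, and if the $m_k$ stay bounded then a fixed nontrivial factor $a_ma^{-1}$ is conjugated by the divergent points $b_{n_k}$, which pushes the words (whose reduced length stays five) out of every neighborhood of $a$. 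In the remaining case one must rule out $b_{n_k}\,(a_{m_k}a^{-1})\,b_{n_k}^{-1}\to e$ for a genuine null sequence $a_{m_k}a^{-1}\to e$. This is precisely where non-compactness enters: the free group topology on $F(X)$ fails to be balanced, so conjugation by the divergent $b_{n_k}$ need not contract $a_{m_k}a^{-1}$ to $e$. Because $\{b_n\}$ is closed discrete one may prescribe a continuous gauge $\delta\colon X\to(0,\infty)$ with $\delta(b_n)$ tending to $0$ as fast as desired; the corresponding basic neighborhood of $a$ in $F_5(X)$ then excludes all $w_{n_k,m_k}$, since the $b_{n_k}$-material can be canceled back to $a$ only when $a_{m_k}a^{-1}$ is smaller than $\delta$ permits. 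Choosing $\delta$ to defeat the given sequence contradicts convergence.

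The principal obstacle is therefore this last step: making rigorous the neighborhood basis of $F_5(X)$ for metrizable $X$ (the Graev-type metric with a variable radius, or Tkachenko's description of the free topology) and carrying out the bookkeeping of all cancellation patterns among at most five letters needed to show that a tiny gauge at the points $b_n$ really keeps $w_{n,m}$ away from $a$. This analysis also explains the sharp constant: the two letters $b_n,b_n^{-1}$ gate the approach, the two letters $a_m,a^{-1}$ build the null sequence, and the single surviving letter $a_n$ forces the column limits to be distinct while still converging to $a$. Dropping any one of these either collapses all column limits to $e$---so that a single column already reaches the obstruction point, as in a sequential fan, which is Fr\'echet--Urysohn---or destroys the gating, so that length five is exactly what the construction requires.
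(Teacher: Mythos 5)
Note first that the paper itself does not prove this statement: it is quoted from Yamada \cite{Y2002} as background for the $F_4$ conjecture, so the only internal comparison available is with the machinery the paper builds for the $F_4$ case. On its own terms, your sufficiency half is correct and routine: for discrete $X$ the whole group $F(X)$ is discrete, and for compact metrizable $X$ the space $F_5(X)$ is a continuous Hausdorff image of the compact metrizable space $(X\oplus X^{-1}\oplus\{e\})^{5}$, hence compact metrizable, hence Fr\'echet--Urysohn. Your necessity construction is also the right one: the Arens-type sheaf $w_{n,m}=b_na_ma^{-1}b_n^{-1}a_n$ is exactly the kind of witness used in the literature, $a\in\overline{A}$ holds, and the reduction to the case $n_k\to\infty$, $m_k\to\infty$ is sound (the bounded-$m_k$ case is cleanest via the abelianization homomorphism $F(X)\to A(X)$: the images $a_m-a+a_{n_k}$ converge to $a_m\neq a$, so $w_{n_k,m}\not\to a$).

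The genuine gap sits exactly where you yourself flag ``the principal obstacle'': you never prove that no sequence $w_{n_k,m_k}$ with $n_k,m_k\to\infty$ converges to $a$, and that single claim \emph{is} the theorem; everything else is soft. Moreover, the tool you gesture at --- a Graev-type metric with a variable radius --- cannot work if the gauge is any single continuous pseudometric: for every continuous pseudometric $\varrho$ on $X$ one has $N_\varrho(a^{-1}w_{n,m})\le \varrho(a_n,a)+\varrho(a_m,a)$, because the scheme pairing $b_n$ with $b_n^{-1}$ costs $0$ no matter how heavily the gauge weights $b_n$; hence the diagonal $w_{k,k}$ converges to $a$ in the group topology generated by \emph{all} such prenorm balls. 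The obstruction is therefore invisible to every single-pseudometric neighborhood, and one is forced to use the genuinely non-balanced neighborhoods of $F(X)$ (per-conjugator gauges, i.e.\ sets of the form $\bigcup_{n}\{\,g_1h_1g_1^{-1}\cdots g_nh_ng_n^{-1}: h_i\ \mbox{small depending on}\ g_i\,\}$) and then verify that $a^{-1}w_{n_k,m_k}$ admits \emph{no} representation of that form --- an analysis over all cancellation patterns, not just the obvious one. That verification is what Yamada actually carries out, and it is what the present paper does for the $F_4$ analogue in Lemma~\ref{l5} by a more tractable device: a continuous homomorphism $f=\pi\circ\overline{\psi}\colon F(C\oplus D)\to A(C\times F(D))$ that linearizes conjugation, sending words of the form $b(a_ma^{-1})b^{-1}$ to elements such as $(a_m,b)-(a,b)$ of $A_2(C\times F(D))$, where the known convergence structure of free abelian groups over metrizable spaces produces the contradiction. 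Until you either carry out the non-balanced-neighborhood bookkeeping or substitute such a homomorphism argument, your necessity direction is a correct plan rather than a proof.
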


\begin{theorem}\cite{Y2002}\label{t1}
Let $X$ be a metrizable space. Then $F_3(X)$ is Fr\'echet-Urysohn if
and only if the set of all the non-isolated points of $X$ is
compact.
\end{theorem}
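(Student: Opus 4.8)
The plan is to analyze convergence in $F_3(X)$ through the canonical stratification by reduced length, reducing the Fr\'echet--Urysohn property to one delicate case—short words approached by longer ones—where the compactness of the set of non-isolated points is exactly what is needed. \emph{Setup.} Since $X$ is metrizable I fix a bounded metric $d$ on $X$, and I recall that the topology of $F(X)$ is generated by the Graev extensions $\hat\rho$ (invariant pseudometrics on $F(X)$) of all continuous pseudometrics $\rho$ on $X$; thus $g_j\to g$ in $F(X)$ iff $\hat\rho(g_j,g)\to 0$ for every such $\rho$. Writing $\tilde X=X\sqcup X^{-1}\sqcup\{e\}$, I will use the standard structural facts (Joiner) that $F_{m-1}(X)$ is closed in $F_m(X)$ and that $F_m(X)\setminus F_{m-1}(X)$ is homeomorphic to the (metrizable) space of reduced words of length exactly $m$ inside $\tilde X^{\,m}$. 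Finally write $X=D\sqcup K$, where $K$ is the closed set of non-isolated points and $D$ the open discrete set of isolated points.

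\emph{Sufficiency.} Assume $K$ is compact and let $g\in\overline A$ with $A\subseteq F_3(X)$; I want a sequence in $A$ converging to $g$. Splitting $A=A_1\cup A_2\cup A_3$ by reduced length and using that each $F_{m-1}$ is closed, $g$ lies in the closure of some $A_m$ with $m\ge\ell$, where $\ell$ is the length of $g$. If $m=\ell$, then $g$ and the relevant words all lie in the single stratum $F_m\setminus F_{m-1}$, which is metrizable, so a convergent sequence exists automatically. The essential case is $m>\ell$: words $w_j=y_1^{(j)}y_2^{(j)}y_3^{(j)}\in A_m$ of length $m$ reduce, in the limit, to the shorter word $g$, so some adjacent letters must cancel, i.e.\ one has pairs whose product $y^{(j)}\,y'^{(j)}$ tends to $e$. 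Such a cancelling pair consists of letters $y^{(j)},\,(y'^{(j)})^{-1}$ that become arbitrarily close; after passing to a subsequence they are either eventually constant or accumulate, and an accumulation point of a sequence of distinct points of $X$ is non-isolated, hence lies in $K$. Here is where compactness enters: by sequential compactness of $K$ I pass to a subsequence along which every cancelling letter converges (to a point of $\tilde K$) and every surviving letter converges to the corresponding letter of $g$; the resulting product reduces to $g$, and because the Graev pseudometrics behave continuously under this controlled cancellation, $w_j\to g$ along the subsequence. This produces the required sequence in $A$.

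\emph{Necessity.} Conversely, suppose $K$ is not compact. Being a closed subset of a metrizable space, $K$ then contains a sequence $\{b_n\}$ that is closed and discrete in $X$ with no convergent subsequence; since each $b_n$ is non-isolated, fix $a_{n,k}\to b_n$ with $a_{n,k}\ne b_n$ and $d(a_{n,k},b_n)<1/(n+k)$. Using these I will construct a countable family $A\subseteq F_3(X)$ together with a target $g\in\overline A$ realizing an Arens-type (i.e.\ $S_2$-like) obstruction, in which $g$ is approached by $A$ only ``diagonally'' as $n\to\infty$. Concretely, one forms length-$\le 3$ words built from $a_{n,k}$ and $b_n$ (for instance products of a $k$-convergent factor with a factor recording the index $n$) so that (i) every neighborhood of $g$ meets $A$, giving $g\in\overline A$, while (ii) any sequence drawn from $A$ either has its index $n$ bounded—whence it converges to an intermediate word $\ne g$—or has $n\to\infty$, in which case the closed-discreteness of $\{b_n\}$ lets me manufacture a continuous pseudometric $\rho$ (hence a neighborhood in $F_3(X)$) on which $\hat\rho(\,\cdot\,,g)$ stays bounded away from $0$ along the sequence. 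Thus no sequence in $A$ converges to $g$, and $F_3(X)$ fails to be Fr\'echet--Urysohn.

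\emph{Main obstacle.} The crux in both directions is the control of reduced length under limits. In the sufficiency direction the work is to show that compactness of $K$ suffices to extract simultaneous limits of all cancelling letter-pairs while tracking the reduction to $g$—equivalently, that the only obstruction to local metrizability of $F_3(X)$ comes from approaches across strata, which $K$ compact resolves. In the necessity direction the hard part is pinning down the exact family $A$ and the target $g$ and then verifying, through an explicit neighborhood (pseudometric) description in $F_3(X)$, both that $g\in\overline A$ and that every sequence is separated from $g$; this diagonal separation is precisely what the failure of compactness of $K$ makes possible, and is where I expect the genuine difficulty to lie.
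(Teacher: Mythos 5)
First, a point of comparison: the paper does not prove this statement at all --- it is quoted from Yamada \cite{Y2002} and used as a black box in the proof of the main theorem (the case $g\in\overline{F_3(X)\cap B}$). So your proposal can only be judged on its own merits, and there it has two genuine gaps, one in each direction.

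In the sufficiency direction your argument is circular at the decisive step. After reducing to $g\in\overline{A_m}$ with $m>\ell(g)$, you immediately speak of ``words $w_j\in A_m$ of length $m$ [which] reduce, in the limit, to the shorter word $g$'' and proceed to extract cancelling letter-pairs from this sequence. But the existence of a sequence in $A_m$ converging to $g$ is exactly the Fr\'echet--Urysohn property you are trying to prove; in the non-first-countable space $F_3(X)$, membership $g\in\overline{A_m}$ hands you no sequence to analyze. The real content of such a proof is to exhibit a \emph{countable} family of neighborhoods of $g$ --- for instance the sets $U_{\varrho_n}(g)$ of \cite[Theorem 7.2.11]{AT2008} with $\varrho_n=nk\varrho$ for one carefully constructed compatible metric $\varrho$ --- with the property that \emph{any} choice of points $w_n\in U_{\varrho_n}(g)\cap A_m$ automatically converges to $g$. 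This is precisely the mechanism the present paper builds for the $F_4(X)$ analogue: Lemmas~\ref{l1} and \ref{l2} construct a special metric whose geometry (distances bounded below off $V_k$, cancelling pairs forced to accumulate in $K$) makes the chosen points converge, and Lemmas~\ref{l5} and \ref{l6} then carry out the hard cross-stratum cases. Your sentence ``because the Graev pseudometrics behave continuously under this controlled cancellation, $w_j\to g$ along the subsequence'' asserts the conclusion rather than supplying this mechanism; it is the entire theorem compressed into a clause. (Incidentally, an exponent-sum argument shows that only $\ell(g)=1$ can occur for $m=3$ and only $g=e$ for $m=2$, which would sharpen your case analysis.)

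The necessity direction is in worse shape: you never define the set $A$, the target $g$, or the separating pseudometric. You write ``for instance products of a $k$-convergent factor with a factor recording the index $n$'' and then concede that the verification ``is where I expect the genuine difficulty to lie.'' That verification --- producing a concrete Arens-type configuration in $F_3(X)$ from the words built out of $a_{n,k}$ and $b_n$, showing $g\in\overline{A}$, and exhibiting for each candidate sequence a continuous pseudometric whose Graev extension keeps it away from $g$ --- \emph{is} the theorem in that direction, and it is absent. As it stands, the proposal is a reasonable map of where the difficulties lie, but neither direction is actually proved.
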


Therefore, Yamada gave the following conjecture:

\smallskip
\noindent{\bf Yamada's Conjecture:} \cite{Y2002} If the set of all
non-isolated points of a metrizable space $X$ is compact, then
$F_4(X)$ is Fr\'echet-Urysohn.

\smallskip
In this paper, we shall give an affirmative answer to Yamada's
Conjecture.

Let $X$ be a non-empty Tychonoff space. Throughout this paper, $X^{-1}
:=\{x^{-1}: x\in X\}$, which is just a copy of
$X$. For every $n\in\mathbb{N}$, $F_{n}(X)$ denotes the
subspace of $F(X)$ that consists of all words of reduced length at
most $n$ with respect to the free basis $X$. Let $e$ be the neutral element of $F(X)$ (i.e., the empty
word). For every $n\in\mathbb{N}$ and an element
$(x_{1}, x_{2}, \cdots, x_{n})$ of $(X\bigoplus X^{-1}\bigoplus\{e\})^{n}$
we call $g=x_{1}x_{2}\cdots x_{n}$ a {\it word}. This word $g$ is
called {\it reduced} if it does not contain $e$ or any pair of
consecutive symbols of the form $xx^{-1}$ or $x^{-1}x$. It follows
that if the word $g$ is reduced and non-empty, then it is different
from the neutral element $e$ of $F(X)$. In particular, each element
$g\in F(X)$ distinct from the neutral element can be uniquely written
in the form $g=x_{1}^{r_{1}}x_{2}^{r_{2}}\cdots x_{n}^{r_{n}}$, where
$n\geq 1$, $r_{i}\in\mathbb{Z}\setminus\{0\}$, $x_{i}\in X$, and
$x_{i}\neq x_{i+1}$ for each $i=1, \cdots, n-1$, and the {\it support}
  of $g=x_{1}^{r_{1}} x_{2}^{r_{2}}\cdots x_{n}^{r_{n}}$ is defined as
  $\mbox{supp}(g) :=\{x_{1}, \cdots, x_{n}\}$. Given a subset $K$ of
  $F(X)$, we define $\mbox{supp}(K):=\bigcup_{g\in K}\mbox{supp}(g)$.

\section{The proof of Yamada's Conjecture}
Throughout this paper, we always assume that $(X, d)$ is a metric
space with a metric $d$ such that the set $K$ of all the
non-isolated points of $X$ is compact. For each $n\in\mathbb{N}$,
let $$V_n=\{x\in X: d(x, K)<1/n\}.$$  Let $W_1=X\setminus V_1$ and
$W_n=V_{n-1}\setminus V_n$ for each $n\geq 2$. It is easy to see
that each $W_n$ is a closed discrete subspace.

The following lemma play an important role in our proof.

\begin{lemma}\label{l1}
There is a compatible metric $\varrho$ on $(X, d)$, which satisfies
the following conditions:
\begin{enumerate}
\smallskip
\item $\varrho(x, y)=\varrho(y, x)$ for any $x, y\in X$;
\smallskip
\item $\varrho(x,y)=|i-j|/(i\cdot j)$ if $x\in W_i, y\in W_j, i\neq j$ and $d(x,
y)<|i-j|/(i\cdot j)$;
\smallskip
\item $\varrho(x, y)=1/(i\cdot (i+1))$ if $x, y\in W_i,
x\neq y$ and $d(x, y)<1/(i\cdot (i+1))$;
\smallskip
\item $\varrho(x, y)=d(x, y)$, otherwise.
\end{enumerate}
\end{lemma}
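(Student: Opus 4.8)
The plan is to exhibit $\varrho$ explicitly and then verify it is a metric inducing the topology of $d$. Observe first that if $x\in W_i$ then $x\notin V_i$, so $d(x,K)\ge 1/i$. Define the level function $f\colon X\to\mathbb{R}$ by $f(x)=1/i$ if $x\in W_i$ and $f(x)=0$ if $x\in K$; note that $X\setminus K=\bigcup_i W_i$ is exactly the set of isolated points of $X$. Conditions $(1)$–$(4)$ amount to setting $\varrho(x,y)=\max\{d(x,y),\ell(x,y)\}$, where the floor $\ell(x,y)$ equals $|1/i-1/j|$ when $x\in W_i,\ y\in W_j$ with $i\ne j$; equals $1/(i(i+1))$ when $x,y\in W_i$ with $x\ne y$; and equals $0$ otherwise (in particular whenever $x=y$ or one of $x,y$ lies in $K$). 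Symmetry, nonnegativity, and $\varrho(x,y)=0\iff x=y$ are then immediate, using $\varrho\ge d$.

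The core of the argument is a single Lipschitz-type estimate: for all $a,b\in X$,
\[
\varrho(a,b)\ \ge\ |f(a)-f(b)|.
\]
This is checked case by case; the only nontrivial instance is $a\in W_i,\ b\in K$, where $\varrho(a,b)=d(a,b)\ge d(a,K)\ge 1/i=|f(a)-f(b)|$, and the other cases follow directly from the definition of $\ell$. Using this I would establish the triangle inequality $\varrho(x,z)\le\varrho(x,y)+\varrho(y,z)$ as follows. Since $\varrho\ge d$ and $d$ is a metric, the inequality is automatic whenever $\varrho(x,z)=d(x,z)$; hence I may assume $\varrho(x,z)=\ell(x,z)>d(x,z)$, which forces $x\ne z$ and $x\in W_p,\ z\in W_q$ for some $p,q$. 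If $p\ne q$, the estimate above gives $\varrho(x,y)+\varrho(y,z)\ge|f(x)-f(y)|+|f(y)-f(z)|\ge|f(x)-f(z)|=\ell(x,z)$, as required.

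The remaining, and delicate, case is $p=q$, where $f(x)=f(z)$ and the Lipschitz bound degenerates to $0$; here one must use the explicit value $\ell(x,z)=1/(p(p+1))$. I would split on the location of $y$: if $y\in W_p\setminus\{x,z\}$ then $\varrho(x,y)\ge\ell(x,y)=1/(p(p+1))=\ell(x,z)$; if $y\in W_r$ with $r\ne p$ then $\varrho(x,y)\ge|1/p-1/r|\ge 1/(p(p+1))$, since $1/(p(p+1))$ is the minimal gap between $1/p$ and the nearest $1/r$; if $y\in K$ then $\varrho(x,y)=d(x,y)\ge 1/p\ge\ell(x,z)$; and if $y\in\{x,z\}$ the inequality is trivial. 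In every subcase one summand already dominates $\ell(x,z)$. This same-shell case is where the specific choice $1/(i(i+1))$—precisely the spacing between consecutive terms of $\{1/i\}$, so that even a single inter-shell jump exceeds it—is essential, and I expect it to be the main obstacle.

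Finally, for compatibility of $\varrho$ with $d$, I would argue at the level of convergent sequences, both topologies being metrizable. Since $\varrho\ge d$, every $\varrho$-convergent sequence is $d$-convergent. Conversely, suppose $d(x_n,x)\to 0$. If $x\in K$, then for each $n$ the pair $(x_n,x)$ falls in the ``otherwise'' clause, so $\varrho(x_n,x)=d(x_n,x)\to 0$; if $x\in X\setminus K$, then $x$ is isolated, so $x_n=x$ eventually and $\varrho(x_n,x)\to 0$. Thus $d$- and $\varrho$-convergence coincide, giving equality of the two topologies and completing the proof.
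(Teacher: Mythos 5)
Your proof is correct, and it is organized along genuinely different lines than the paper's, even though the underlying estimates coincide. The paper proves the triangle inequality by a case analysis on how many of the three points $x,y,z$ lie in $K$ (its Cases 1--3), re-deriving the needed bounds inside each case; your Lipschitz inequality $\varrho(a,b)\geq |f(a)-f(b)|$ appears there only implicitly, e.g.\ as the computation $\frac{|ij-jk|+|ij-ik|}{ijk}\geq\frac{|ik-jk|}{ijk}$ in Case 3 and as the bounds $\varrho(x,y)\geq 1/i$, $\varrho(x,z)\geq 1/j$ in Case 2. Your reformulation $\varrho=\max\{d,\ell\}$ together with the level function $f$ collapses every cross-shell case into one line, isolating the only genuinely delicate situation --- both endpoints in the same $W_p$ --- where both proofs invoke the same two facts: the floor $1/(p(p+1))$ is the minimal gap between $1/p$ and any other $1/r$, and any point of $K$ is at $d$-distance at least $1/p$ from $W_p$. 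Beyond the cleaner decomposition, your write-up supplies something the paper omits entirely: the paper ends with ``it is easily checked that the topology generated by $\varrho$ is compatible with $(X,d)$,'' whereas your sequence argument ($\varrho\geq d$ gives one inclusion; for the other, a $d$-limit in $K$ puts all pairs in the ``otherwise'' clause, and a $d$-limit outside $K$ is isolated, forcing eventual constancy) actually proves compatibility. So the two arguments are mathematically equivalent on the triangle inequality, but yours factors the estimates through a reusable lemma and closes the compatibility gap that the paper waves away.
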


\begin{proof}
First we prove $\varrho$ is a metric on $X$. Clearly, it suffices to
prove the triangle inequality. Take arbitrary $x, y, z\in X$. We may
assume that $x, y, z$ are distinct each other.

\smallskip
{\bf Case 1} $|\{x, y, z\}\cap K|\geq 2$.

\smallskip
Then $\varrho(x, y)=d(x, y), \varrho(x, z)=d(x, z)$ and $\varrho(z,
y)=d(z, y)$. Since $d(x, y)\leq d(x, z)+d(z, y)$, $\varrho(x, y)\leq
\varrho(x, z)+\varrho(z, y)$.

\smallskip
{\bf Case 2} $|\{x, y, z\}\cap K|=1$.

\smallskip
Without loss of generality, we may assume that $x\in K$. Obviously,
there exist $i, j\in \mathbb{N}$ such that $y\in W_{i}$ and $z\in
W_{j}$. Then $\varrho(x, y)=d(x, y), \varrho(x, z)=d(x, z)$. It
follows from the definition of $\varrho$ that $d(y, z)\leq
\varrho(y, z)$. Therefore, $\varrho(x, y)=d(x, y)\leq d(x, z)+d(z,
y)=\varrho(x, z)+d(z, y)\leq\varrho(x, z)+\varrho(y, z)$;
$\varrho(x, z)=d(x, z)\leq d(x, y)+d(y, z)=\varrho(x, y)+d(y,
z)\leq\varrho(x, y)+\varrho(y, z)$. If $i=j$ and $d(y, z)<1/(i\cdot
(i+1))$, then $\varrho(y, z)=1/(i\cdot (i+1))$, hence $\varrho(y,
z)\leq\varrho(y, x)+\varrho(x, z)$ since $\varrho(x, y)\geq1/i$ and
$\varrho(x, z)\geq1/i$. If $i=j$ and $d(y, z)\geq1/(i\cdot (i+1))$,
then it is obvious. If $i\neq j$ and $d(y, z)<|i-j|/(i\cdot j)$, then
$\varrho(y, z)=|i-j|/(i\cdot j)$, hence $\varrho(y, z)\leq\varrho(y,
x)+\varrho(x, z)$ since $\varrho(x, y)\geq1/i$ and $\varrho(x, z)\geq1/j$.
If $i\neq j$ and $d(y, z)\geq|i-j|/(i\cdot j)$, then it is obvious.

\smallskip
{\bf Case 3} $|\{x, y, z\}\cap K|=0$.

\smallskip
Then there exist $i, j, k\in \mathbb{N}$ such that $x\in W_{i}, y\in
W_{j}, z\in W_{k}$. Without loss of generality, it suffices to prove
$\varrho(x, y)\leq\varrho(x, z)+\varrho(y, z)$.

If $i=j$ and $d(x, y)<1/(i\cdot (i+1))$, then $\varrho(x,
y)=1/(i\cdot (i+1))$. Moreover, $\varrho(x, z)\geq|i-k|/(i\cdot k)$ and $\varrho(y, z)\geq|i-k|/(i\cdot k)$. Assume that $i=k$, then $\varrho(x,
z)+\varrho(z, y)=1/(i\cdot (i+1))+1/(i\cdot (i+1))=2/(i\cdot (i+1))\geq 1/(i\cdot (i+1))=\varrho(x, y)$.
Hence it suffices to consider $i\neq k$. Then $\varrho(x, z)+\varrho(z, y)\geq|i-k|/(i\cdot k)+|i-k|/(i\cdot k)=2|i-k|/(i\cdot k)\geq 1/(i\cdot (i+1))=\varrho(x, y)$.
If $i=j$ and $d(x, y)\geq 1/(i\cdot (i+1))$, then $\varrho(x, y)=d(x, y)\leq d(x, z)+d(y, z)\leq\varrho(x, z)+\varrho(y, z)$. If
$i\neq j$ and $d(x, y)<|i-j|/(i\cdot j)$, then $\varrho(x,
y)=|i-j|/(i\cdot j)$, hence $\varrho(x, z)+\varrho(z,
y)\geq |i-k|/(i\cdot k)+|j-k|/(j\cdot k)=\frac{|ij-jk|+|ij-ik|}{ijk}\geq\frac{|ik-jk|}{ijk}=|i-j|/(i\cdot j)=\varrho(x, y)$. If $i\neq j$
and $d(x, y)\geq |i-j|/(i\cdot j)$, then $\varrho(x, y)=d(x, y)\leq d(x, z)+d(y, z)\leq\varrho(x, z)+\varrho(y, z)$.

Therefore, $\varrho$ is a metric. It easily check that the topology
generated by the metric $\varrho$ on $X$ is compatible with $(X,
d)$.
\end{proof}

\begin{lemma}\label{l2}
The metric $\varrho$ in Lemma~\ref{l1} has the following properties:

\smallskip
(1) For any $x\in X\setminus V_k, y\in X, y\neq x$, it has $\varrho(x,
y)>1/(k+1)^2$.

\smallskip
(2) If $\varrho(x_n, y_n)<1/n$ for each $n\in\mathbb{N}$, then there
exist sequences $\{x_{n_k}\}$ and $\{y_{n_k}\}$ such that
$x_{n_k}\to x_{0}$ and $y_{n_k}\to x_{0}$ as $k\rightarrow\infty$,
where $x_{n}, y_{n}\in X, x_{n}\neq y_{n}$ and $x_{0}\in K$.
\end{lemma}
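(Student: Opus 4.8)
The plan is to extract both statements from the explicit description of $\varrho$ given in Lemma~\ref{l1}, using the annulus structure of the $W_i$ and the compactness of $K$.

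For part (1), I would first translate the hypothesis into a statement about the $W_i$. Since $x\in X\setminus V_k$ means $d(x,K)\geq 1/k>0$, the point $x$ lies outside $K$, and in fact $x\in W_i$ for some index $i\leq k$ (membership $x\in W_i$ forces $d(x,K)\geq 1/i$, and combined with $d(x,K)\geq 1/k$ and the nesting of the $V_n$ this gives $i\leq k$). I would then split into three cases according to where $y$ sits. If $y\in K$, then only clause (4) of Lemma~\ref{l1} can apply, so $\varrho(x,y)=d(x,y)\geq d(x,K)\geq 1/i\geq 1/k>1/(k+1)^2$. If $y\in W_i$ with the same index, clauses (3) and (4) both yield $\varrho(x,y)\geq 1/(i\cdot(i+1))\geq 1/(k\cdot(k+1))>1/(k+1)^2$. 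Finally if $y\in W_j$ with $j\neq i$, clauses (2) and (4) give $\varrho(x,y)\geq |i-j|/(i\cdot j)$, and the elementary observation that $\min_{j\neq i}|i-j|/(i\cdot j)=1/(i\cdot(i+1))$ reduces this to the bound of the previous case. Thus the whole of part (1) rests on the two arithmetic inequalities $1/(i\cdot(i+1))\geq 1/(k\cdot(k+1))>1/(k+1)^2$ for $i\leq k$ and $|i-j|/(i\cdot j)\geq 1/(i\cdot(i+1))$.

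For part (2), I would apply part (1) in contrapositive form: if $\varrho(x,y)\leq 1/(k+1)^2$ for some $y\neq x$, then $x\in V_k$, i.e. $d(x,K)<1/k$. Given $\varrho(x_n,y_n)<1/n$ with $x_n\neq y_n$, I would choose for each $n$ an index $k_n$ with $(k_n+1)^2\leq n$ (for instance $k_n=\lfloor\sqrt n\rfloor-1$), so that $\varrho(x_n,y_n)<1/n\leq 1/(k_n+1)^2$ and hence $d(x_n,K)<1/k_n$. Since $k_n\to\infty$, this forces $d(x_n,K)\to 0$. Using the compactness of $K$, for each $n$ pick $p_n\in K$ realizing $d(x_n,p_n)=d(x_n,K)$, extract a subsequence $p_{n_k}\to x_0\in K$, and conclude from $d(x_{n_k},x_0)\leq d(x_{n_k},p_{n_k})+d(p_{n_k},x_0)\to 0$ that $x_{n_k}\to x_0$. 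To handle $y_{n_k}$ with the \emph{same} limit, I would invoke the triangle inequality for $\varrho$: $\varrho(y_{n_k},x_0)\leq\varrho(y_{n_k},x_{n_k})+\varrho(x_{n_k},x_0)<1/n_k+\varrho(x_{n_k},x_0)$. Since $\varrho$ is compatible with the topology and $x_{n_k}\to x_0$, the second summand tends to $0$, whence $y_{n_k}\to x_0$ as well.

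The main obstacle is twofold. In part (1) it is purely the bookkeeping of the case analysis, and in particular verifying that the minimal separation $|i-j|/(i\cdot j)$ between distinct annuli is still bounded below by $1/(k+1)^2$ when $i\leq k$; this is the step where the precise form of clause (2) is used. In part (2) the delicate point is the quantitative link between $\varrho(x_n,y_n)<1/n$ and $d(x_n,K)\to 0$, which requires choosing $k_n$ of the right order $\sqrt n$, and then guaranteeing that the two subsequences converge to one and the same point of $K$ — this last issue I would resolve not by treating $y_n$ symmetrically (which would only give $d(y_n,K)\to 0$ and a possibly different limit) but by routing $y_{n_k}$ through $x_0$ via the $\varrho$-triangle inequality.
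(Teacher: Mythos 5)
Your proof is correct. The paper in fact gives no proof of Lemma~\ref{l2} at all --- it is stated as an immediate consequence of the construction in Lemma~\ref{l1} --- and your argument (the annulus-by-annulus case analysis with the bounds $1/(i(i+1))\geq 1/(k(k+1))>1/(k+1)^2$ for part (1), then part (1) in contrapositive form together with compactness of $K$ and the $\varrho$-triangle inequality for part (2)) is exactly the direct verification the authors leave to the reader.
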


Let $\varrho^*$ and $N_{\varrho}$ be defined in the proof in
\cite[Theorem 7.2.2]{AT2008}. For the convenience, we give out the
definitions.

Suppose that $e$ is the neutral element of the abstract free group
$F_{a}(X)$ on $X$. Extend $\varrho$ from $X$ to a metric
$\varrho^{\ast}$ on $X\cup\{e\}\cup X^{-1}$. Choose a point $x_{0}\in
X$ and for every $x\in X$, put
$$\varrho^*(e, x)=\varrho^*(e, x^{-1})=1+\varrho(x_{0}, x).$$ Then for $x, y\in X$, define the distance $\varrho^*(x^{-1}, y^{-1}), \varrho^*(x^{-1}, y)$ and $\varrho^*(x, y^{-1})$ by $$\varrho^*(x^{-1}, y^{-1})=\varrho^*(x, y)=\varrho(x, y),$$ $$\varrho^*(x^{-1}, y)=\varrho^*(x, y^{^{-1}})=\varrho^*(x, e)+\varrho^*(e, y).$$

Let $A$ be a subset of $\mathbb{N}$ such that $|A|=2n$ for some
$n\geq 1$. A {\it scheme} on $A$ is a partition of $A$ to pairs
$\{a_{i}, b_{i}\}$ with $a_{i}<b_{i}$ such that each two intervals
$[a_{i}, b_{i}]$ and $[a_{j}, b_{j}]$ in $\mathbb{N}$ are either
disjoint or one contains the other.

If $\mathcal{X}$ is a word in the alphabet $X\cup\{e\}\cup X^{-1}$,
then we denote the reduced form and the length of  $\mathcal{X}$ by
$[\mathcal{X}]$ and $\ell (\mathcal{X})$ respectively.

For each $n\in \mathbb{N}$, let $\mathcal{S}_{n}$ be the family of
all schemes $\varphi$ on $\{1, 2, \cdots, 2n\}$. As in
\cite{AT2008}, define
$$\Gamma_{\varrho}(\mathcal{X}, \varphi)=\frac{1}{2}\sum_{i=1}^{2n}\varrho^{\ast}(x_{i}^{-1}, x_{\varphi (i)}).$$
Then we define a prenorm $N_{\varrho}: F_{a}(X)\rightarrow [0,
+\infty)$ by setting $N_{\varrho}(g)=0$ if $g=e$ and
$$N_{\varrho}(g)=\inf\{\Gamma_{\varrho}(\mathcal{X}, \varphi):
[\mathcal{X}]=g, \ell (\mathcal{X})=2n, \varphi\in\mathcal{S}_{n},
n\in \mathbb{N}\}$$ if $g\in F_{a}(X)\setminus\{e\}$.

\begin{lemma}\label{l5}
Let
$$B=\{x^{\epsilon_1}y^{\epsilon_2}z^{\epsilon_3}t^{\epsilon_4}\in
F_{4}(X)\setminus F_{3}(X): x, y, z, t\in X, \epsilon_i\in\{-1, 1\},
\sum\epsilon_i=0, 1\leq i\leq 4\}.$$ If $e\in \overline{B}$, then
there is a convergent sequence
$\{h_n=x_n^{\epsilon_1}y_n^{\epsilon_2}z_n^{\epsilon_3}t_n^{\epsilon_4}\}$
in $B$ such that $h_n\to e$ as $n\rightarrow\infty$.
\end{lemma}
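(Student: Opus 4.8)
The plan is to exploit the continuity of the prenorm $N_{\varrho}$ together with the special geometry of $\varrho$ recorded in Lemma~\ref{l2} to construct the sequence by hand. First I would record that, by \cite{AT2008}, $N_{\varrho}$ is a continuous prenorm on $F(X)$, so each $U_n=\{g\in F(X):N_{\varrho}(g)<1/n\}$ is an open neighborhood of $e$. Since $e\in\overline{B}$, for every $n$ I can choose $h_n=x_n^{\epsilon_1}y_n^{\epsilon_2}z_n^{\epsilon_3}t_n^{\epsilon_4}\in B\cap U_n$. There being only finitely many admissible sign patterns (exactly two $+1$'s and two $-1$'s), after passing to a subsequence I may assume $(\epsilon_1,\epsilon_2,\epsilon_3,\epsilon_4)$ is fixed.

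Next I would compute $N_{\varrho}(h_n)$. Because $h_n$ is reduced of length $4$, Graev's norm formula from \cite{AT2008} restricts the infimum defining $N_{\varrho}(h_n)$ to schemes on $\{1,2,3,4\}$, of which the non-crossing ones are only $\{1,2\},\{3,4\}$ and $\{1,4\},\{2,3\}$. By the definition of $\varrho^{\ast}$, any matched pair involving $e$, or two letters of the same sign, contributes at least $1$ to $\Gamma_{\varrho}$; as $N_{\varrho}(h_n)<1/n\le 1$, the minimizing scheme $\varphi$ must pair each positive letter with a negative one. Passing to a further subsequence so that $\varphi$ is the same scheme for all $n$, I obtain two matched pairs of base points $\{u_n,v_n\}$ and $\{p_n,q_n\}$ with
$$\varrho(u_n,v_n)+\varrho(p_n,q_n)=\Gamma_{\varrho}(h_n,\varphi)=N_{\varrho}(h_n)<1/n,$$
so in particular $\varrho(u_n,v_n)<1/n$ and $\varrho(p_n,q_n)<1/n$.

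Then I would apply Lemma~\ref{l2}(2). For a matched pair whose two entries differ for infinitely many $n$, Lemma~\ref{l2}(2) produces a subsequence on which both entries converge in $X$ to one and the same point of the compact set $K$; for a pair whose entries eventually coincide there is nothing to do. Diagonalizing over the two pairs, I reach a subsequence $\{h_{n_k}\}$ along which $\sigma(u_{n_k},v_{n_k})\to 0$ and $\sigma(p_{n_k},q_{n_k})\to 0$ for every continuous pseudometric $\sigma$ on $X$. Evaluating the fixed scheme $\varphi$ with $\sigma$ in place of $\varrho$ then gives
$$N_{\sigma}(h_{n_k})\le \Gamma_{\sigma}(h_{n_k},\varphi)=\sigma(u_{n_k},v_{n_k})+\sigma(p_{n_k},q_{n_k})\to 0.$$
Because the prenorms $N_{\sigma}$, with $\sigma$ ranging over all continuous pseudometrics on $X$, generate the topology of $F(X)$ \cite{AT2008}, this yields $h_{n_k}\to e$, the sequence we want. (When all four base points converge, one may instead invoke continuity of the group operations: $h_{n_k}$ tends to the limiting word, which collapses to $e$ precisely because $\varphi$ is non-crossing.)

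The main obstacle is the length reduction used in the second step: one must guarantee that the infimum defining $N_{\varrho}(h_n)$ is already attained by a representation of length $4$, so that exactly two matched pairs occur. Were longer representations genuinely necessary, the number of matched pairs could grow with $n$, and the final estimate $\Gamma_{\sigma}(h_{n_k},\varphi)\to 0$ would break down, since a growing sum of $\sigma$-distances cannot be controlled by the single bound $N_{\varrho}(h_n)<1/n$. Securing this length bound, and checking pattern by pattern that the admissible non-crossing scheme really encodes a cancellation collapsing the limiting word to $e$, is the technical heart of the argument.
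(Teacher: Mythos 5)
Your norm computation is fine as far as it goes, and the ``main obstacle'' you flag at the end is in fact not an obstacle at all: Claim~1 in the proof of \cite[Theorem 7.2.2]{AT2008} provides an almost reduced representation of length at most $2\ell(g)$ using only the letters of $g$ and $e$, and any matched pair involving $e$ or two letters of equal sign contributes at least $1$, so for $N_{\varrho}(h_n)<1/n\le 1$ the minimizing scheme is indeed one of the two non-crossing pairings of opposite-sign letters. The proof actually breaks at the point you dismiss with ``for a pair whose entries eventually coincide there is nothing to do.'' When the scheme is $\{1,4\},\{2,3\}$ the word is a conjugate $h_n=a_n^{\epsilon_1}b_n^{\epsilon_2}c_n^{-\epsilon_2}a_n^{-\epsilon_1}$: the outer pair $(a_n,a_n)$ contributes $0$ to $\Gamma_{\sigma}$ for \emph{every} pseudometric $\sigma$, yet the conjugators $a_n$ may be pairwise distinct isolated points escaping the compact set $K$, and then nothing in your argument forces any subsequence of $\{h_n\}$ to converge to $e$. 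Your final step rests on the claim that the prenorms $N_{\sigma}$, with $\sigma$ ranging over continuous pseudometrics on $X$, generate the topology of $F(X)$; that is Uspenskii's theorem for the \emph{abelian} group $A(X)$, but it is false for $F(X)$: the Graev balls $\{g:N_{\sigma}(g)<\varepsilon\}$ are neighborhoods of $e$ but do not form a base at $e$ (a base requires families of pseudometrics indexed by the conjugating elements), and this failure is precisely why Yamada's conjecture was nontrivial.

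A concrete counterexample to your last step: let $X=\{a_n:n\in\mathbb{N}\}\oplus(\{x_k:k\in\mathbb{N}\}\cup\{x_0\})$ with all $a_n, x_k$ isolated and $x_k\to x_0$, and put $h_n=a_nx_nx_0^{-1}a_n^{-1}\in F_4(X)\setminus F_3(X)$. Then $N_{\sigma}(h_n)\le\sigma(a_n,a_n)+\sigma(x_n,x_0)=\sigma(x_n,x_0)\to 0$ for every continuous pseudometric $\sigma$ on $X$, so your argument would produce a subsequence converging to $e$; nevertheless $e\notin\overline{\{h_n:n\in\mathbb{N}\}}$. (This follows from the paper's own Case~2 machinery: composing with the continuous map $f=\pi\circ\overline{\psi}\colon F(C\oplus D)\to A(C\times F(D))$, a convergent sequence $f(h_{n_k})\to 0$ would have compact support closure, forcing the conjugators $a_{n_k}$ to come from a finite set since $F(D)$ is discrete, while here they are all distinct; the Fr\'echet--Urysohn property of $A_2(C\times F(D))$ needed for this is \cite[Proposition 4.8]{Y1998}.) This equal-outer-pair situation is exactly the technical heart of the paper's proof, its Case~2: there one shows that if $e$ lies in the closure of the set of such conjugate words, then one can extract words with a single \emph{fixed} conjugator $a$ and with $b_i,c_i\in V_i$, and only after fixing $a$ does continuity of multiplication yield a sequence converging to $e$. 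Your proposal handles, in effect, only the paper's Case~1 (all four letters near $K$), and no argument based solely on the prenorms $N_{\sigma}$ can supply the missing case.
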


\begin{proof}
Let $\varrho^*$ and $N_{\varrho}$ be defined as above. It is known
that for each $n\in\mathbb{N}$,
$$U_{\varrho}(n)=\{g\in F_a(X): N_{\varrho}(g)<1/n\}\cap F_0(X)$$ is
an open neighborhood of $e$ in $F(X)$ by \cite[Theorem
7.2.2]{AT2008}. We divide the proof into the following two cases.

\smallskip
{\bf Case 1}: For each $n\in \mathbb{N}$, there exists
$x_n^{\epsilon_1(n)}y_n^{\epsilon_2(n)}z_n^{\epsilon_3(n)}t_n^{\epsilon_4(n)}\in
B\cap U_\varrho(n)$  such that $x_n, y_n, z_n, t_n\in V_n$, where
$V_n=\{x\in X: d(x, K)<1/n\}.$

\smallskip
Without loss of generality, we may assume that
$\epsilon_i(n)=\epsilon_i (i\leq 4)$. In fact, we can choose a
subsequence of
$\{x_n^{\epsilon_1(n)}y_n^{\epsilon_2(n)}z_n^{\epsilon_3(n)}t_n^{\epsilon_4(n)}:
n\in \mathbb{N}\}$. We can also assume that $x_n\to x, y_n\to y,
z_n\to z, t_n\to t$, where $x, y, z , t\in K$. Clearly,
$\sum\epsilon_i=0$. Then we have the following claim.

\smallskip
{\bf Claim 1:}
$x^{\epsilon_1}y^{\epsilon_2}z^{\epsilon_3}t^{\epsilon_4}=e$.

Fix an $n\in\mathbb{N}$, let $g_{n}=x_n^{\epsilon_1}y_n^{\epsilon_2}z_n^{\epsilon_3}t_n^{\epsilon_4}$ and let $D_{n}=\{x_n^{\epsilon_1}, y_n^{\epsilon_2}, z_n^{\epsilon_3}, t_n^{\epsilon_4}\}$. We claim that $$N_{\varrho}(g_{n})=\min\{\varrho^*(x_n^{-\epsilon_1},
y_n^{\epsilon_2})+\varrho^*(z_n^{-\epsilon_3},
t_n^{\epsilon_4}), \varrho^*(x_n^{-\epsilon_1},
t_n^{\epsilon_4})+\varrho^*(y_n^{-\epsilon_3},
z_n^{\epsilon_4})\}$$ if $N_{\varrho}(g_{n})<\frac{1}{n}$.

In fact, it follows from Claim 1 of the proof in \cite[Theorem 7.2.2]{AT2008} that there exist an almost reduced word $\Upsilon_{g_{n}}=p_{1}\cdots p_{2m}$  with $2m\leq 8$ and a scheme $\varphi_{g_{n}}$ such that $\Upsilon_{g_{n}}$ contains only the letters of $g_{n}$ or the letter $e$ and $N_{\varrho}(g_{n})=\Gamma_{\varrho}(\Upsilon_{g_{n}}, \varphi_{g_{n}})$, where $\Gamma_{\varrho}(\Upsilon_{g_{n}}, \varphi_{g_{n}})=\frac{1}{2}\sum_{i=1}^{2m}\varrho^{\ast}(p_{i}^{-1}, p_{\varphi_{g_{n}}(i)})$. If $p_{i}\in D_{n}$, then $p_{\varphi_{g_{n}}(i)}\in D_{n}$; otherwise, $p_{\varphi_{g_{n}}(i)}=e$, then $\Gamma_{\varrho}(\Upsilon_{g_{n}}, \varphi_{g_{n}})\geq 1$, which is a contradiction with $N_{\varrho}(g_{n})<\frac{1}{n}$.

\smallskip
{\bf Subcase 1.1:} $\varrho^*(x_n^{-\epsilon_1},
y_n^{\epsilon_2})+\varrho^*(z_n^{-\epsilon_3},
t_n^{\epsilon_4})<1/n$ for infinitely many $n\in\mathbb{N}$.

\smallskip
Then $\epsilon_1=-\epsilon_2$ and $\epsilon_3=-\epsilon_4$. If
$x\neq y$, then $\varrho(x, y)=r>0$. It is easy to see that there is
$k\in \mathbb{N}$ such that $\varrho(x, x_i)<r/3$, $\varrho(x_i,
y_i)<r/3$, $\varrho(y_i, y)<r/3$ whenever $i>k$. Then $r=\varrho(x,
y)\leq \varrho(x, x_i)+\varrho(x_i, y_i)+\varrho(y_i,
y)<r/3+r/3+r/3=r$. This is a contradiction, hence $x=y$. Similarly,
$z=t$. Therefore
$x^{\epsilon_1}y^{\epsilon_2}z^{\epsilon_3}t^{\epsilon_4}=e$.

\smallskip
{\bf Subcase 1.2:} $\varrho^*(x_n^{-\epsilon_1},
t_n^{\epsilon_4})+\varrho^*(y_n^{-\epsilon_3},
z_n^{\epsilon_4})<1/n$ for infinitely many $n\in\mathbb{N}$.

\smallskip
Then $\epsilon_1=-\epsilon_4, \epsilon_2=-\epsilon_3$. By the proof
of Subcase 1.1, we can prove that $x=t, y=z$. Then
$x^{\epsilon_1}y^{\epsilon_2}z^{\epsilon_3}t^{\epsilon_4}=e$. The proof of Claim 1 is completed.

By Claim 1, we see that Lemma~\ref{l5} holds.

\smallskip
{\bf Case 2:} There is $n\in \mathbb{N}$ such that for any
$a^{\epsilon_1}b^{\epsilon_2}c^{\epsilon_3}d^{\epsilon_4}\in
U_\varrho(n)\cap B=A$, one of $a, b, c, d$ is not in $V_n$.

\smallskip
We only consider the case $$e\in \overline{\{a^{\epsilon_1}b^{\epsilon_2}c^{\epsilon_3}d^{\epsilon_4}\in A: a\not\in V_{n}\}\cup\{a^{\epsilon_1}b^{\epsilon_2}c^{\epsilon_3}d^{\epsilon_4}\in A: b\not\in V_{n}\}},$$otherwise consider the set
$(U_\varrho(n)\cap B)^{-1}$.
First, we prove that $e\in \overline{\{a^{\epsilon_1}b^{\epsilon_2}c^{\epsilon_3}d^{\epsilon_4}\in A: a\not\in V_{n}\}}$, then it suffices to prove $$e\not\in\overline{\{a^{\epsilon_1}b^{\epsilon_2}c^{\epsilon_3}d^{\epsilon_4}\in A: b\not\in V_{n}\}}.$$ Let $$A_{1}=\{a^{\epsilon_1}b^{\epsilon_2}c^{\epsilon_3}d^{\epsilon_4}\in A: b\not\in V_{n}\}.$$
Assume that $e\in\overline{A_{1}}.$ In order to obtain a contradiction, we find a neighborhood $W_1$ of $e$ such that $W_{1}\cap A_{1}=\emptyset$. Indeed,
let $$W_1=\{g\in F(X): N_\varrho(g)<1/(n+1)^2\}.$$ Then $W_1$ is an
open neighborhood of $e$. Moreover, for any
$g=a^{\epsilon_1}b^{\epsilon_2}c^{\epsilon_3}d^{\epsilon_4}\in A_{1}$,
we prove that $N_\varrho(g)\geq1/(n+1)^2$.
Indeed, it follows from Claim 1 of the proof of \cite[Theorem 7.2.2]{AT2008} that there exist an almost reduced word $\Upsilon_{g}=x_{1}\cdots x_{2m}$ and a scheme $\varphi_{g}$ such that  satisfies the following conditions:

\smallskip
(i) $\Upsilon_{g}$ contains only the letters of $g$ or the letter $e$;

\smallskip
(ii) $[\Upsilon_{g}]=g$ and $\ell(\Upsilon_{g})\leq 2\ell(g)$;

\smallskip
(iii) $N_{\varrho}(g)=\Gamma_{\varrho}(\Upsilon_{g}, \varphi_{g})$.

\smallskip
We claim that $\Gamma_{\varrho}(\Upsilon_{g}, \varphi_{g})\geq\frac{1}{(n+1)^{2}}$. Indeed, it follows from (i) and (ii) that there exists $i_{0}\leq 2m$ such that $x_{i_{0}}=b^{\epsilon_2}$, then $\varrho^*(x_{i_{0}}^{-1}, x_{\varphi_{g}(i_{0})})=\varrho^*(b^{-\epsilon_2}, x_{\varphi_{g}(i_{0})})$. Then we can complete the proof by the following (a)-(d).

\smallskip
(a) If $x_{\varphi_{g}(i_{0})}=e$, then $\varrho^*(b^{-\epsilon_2}, x_{\varphi_{g}(i_{0})})\geq 1$.

\smallskip
(b) If $x_{\varphi_{g}(i_{0})}=a^{\epsilon_1}$, then it follows from (1) of Lemma~\ref{l2} that $\varrho^*(b^{-\epsilon_2}, x_{\varphi_{g}(i_{0})})=\varrho^*(b^{-\epsilon_2}, a^{\epsilon_1})>\frac{1}{(n+1)^{2}}$ since $a^{\epsilon_1}b^{\epsilon_2}\neq e$ and $b\not\in V_{n}$.

\smallskip
(c) Assume $x_{\varphi_{g}(i_{0})}=c^{\epsilon_3}$. If $\epsilon_2=\epsilon_3$ and $b=c$, then $x_{\varphi_{g}(i_{0})}=b^{\epsilon_2}$, hence $\varrho^*(b^{-\epsilon_2}, x_{\varphi_{g}(i_{0})})=\varrho^*(b^{-\epsilon_2}, b^{\epsilon_2})\geq 1\geq\frac{1}{(n+1)^{2}}$. If $b\neq c$, then it follows from (1) of Lemma~\ref{l2} that $\varrho^*(b^{-\epsilon_2}, x_{\varphi_{g}(i_{0})})=\varrho^*(b^{-\epsilon_2}, c^{\epsilon_3})>\frac{1}{(n+1)^{2}}$ since $b\neq c$ and $b\not\in V_{n}$.

\smallskip
(d) Assume $x_{\varphi_{g}(i_{0})}=d^{\epsilon_4}$. Obviously, there exists a $j_{0}\leq 2m$ such that $x_{j_{0}}=c^{\epsilon_3}$. Since $\varphi_{g}$ is a scheme, it has $x_{\varphi_{g}(j_{0})}=e$, hence $\varrho^*(x_{j_{0}}^{-1}, x_{\varphi_{g}(j_{0})})\geq 1$. Then $$\Gamma_{\varrho}(\Upsilon_{g}, \varphi_{g})\geq\varrho^*(b^{-\epsilon_2}, x_{\varphi_{g}(i_{0})})+\varrho^*(x_{j_{0}}^{-1}, x_{\varphi_{g}(j_{0})})\geq 1\geq\frac{1}{(n+1)^{2}}.$$

Therefore, $W_{1}\cap A_{1}=\emptyset$, which is a contradiction.
Hence the point $e$ belongs to the closure of
$\{a^{\epsilon_1}b^{\epsilon_2}c^{\epsilon_3}d^{\epsilon_4}\in A:
a\not\in V_{n}\}$. Further, we claim that $e$ does not belong to the closure of the set
$$A_2=\{a^{\epsilon_1}b^{\epsilon_2}c^{\epsilon_3}d^{\epsilon_4}\in
A: a\neq d, a\not\in V_{n}\}\cup\{a^{\epsilon_1}b^{\epsilon_2}c^{\epsilon_3}d^{\epsilon_4}\in
A: a=d, \epsilon_1=\epsilon_4, a\not\in V_{n}\}.$$

Suppose not, assume that $e\in \overline{A_2}$. In order to obtain a contradiction, it suffices to prove that $W_1\cap A_2=\emptyset$ by a similar proof above.

\smallskip
Therefore, $e$ belongs to the closure of
$A_3=\{a^{\epsilon_1}b^{\epsilon_2}c^{-\epsilon_2}a^{-\epsilon_1}\in
A: a\not\in V_{n}\}$. Next we prove that there exists a convergent sequence in $A_3$ which converges to $e$.

\smallskip
Let $D=\{a\in X:
a^{\epsilon_1}b^{\epsilon_2}c^{-\epsilon_2}a^{-\epsilon_1}\in
A_3\}$, and let
$A_a=\{a^{\epsilon_1}b^{\epsilon_2}c^{-\epsilon_2}a^{-\epsilon_1}\in
A_3\}$ for each $a\in D$. It is obvious that $A_3=\bigcup_{a\in D}
A_a$. We claim that there exist $a\in D$, an infinite subset $M$ of $\mathbb{N}$, $b_i\in V_i$ and $c_i\in V_i$ for any
$i\in M$ such that
$$a^{\epsilon_1}b_i^{\epsilon_2}c_i^{-\epsilon_2}a^{-\epsilon_1}\in
A_3\cap U_{\varrho}(i).$$

Suppose not, for each $a\in D$, there is $n_a\in \mathbb{N}$ $(n_\alpha>n)$
such that either $b\notin V_{n_a}$ or $c\notin V_{n_a}$ for each
$a^{\epsilon_1}b^{\epsilon_2}c^{-\epsilon_2}a^{-\epsilon_1}\in
A_{a}$. Without loss of generality, we may assume that $e$ belongs to the closure of
$A_{4}=\{a^{\epsilon_1}b^{\epsilon_2}c^{-\epsilon_2}a^{-\epsilon_1}\in
A_{3}: a\in D, b\not\in V_{n_{a}}, c\in V_{n}\}$.
Let $C^{\prime}=\{b, c\in X: a^{\epsilon_1}b^{\epsilon_2}c^{-\epsilon_2}a^{-\epsilon_1}\in A_{4}\}$ and $C=\overline{C^{\prime}}$. Obviously, $C\cap D=\emptyset$, $D$ is closed discrete and $C$ is closed in $X$. Define a mapping $f=\pi\circ\overline{\psi}: F(C\oplus D)\rightarrow A(C\times F(D))$, where the mappings $\pi$ and $\overline{\psi}$ are defined in the proof of \cite[Theorem 2.4]{Y2002}. By our definition of $C$ and $D$, it is easy to see that $\pi$, $\overline{\psi}$ and $f$ are all continuous. Therefore, $0=f(e)\in \overline{f(A_{4})}$. For each $a^{\epsilon_1}b^{\epsilon_2}c^{-\epsilon_2}a^{-\epsilon_1}\in A_{4}$, we have
\begin{eqnarray}
\overline{\psi}(a^{\epsilon_1}b^{\epsilon_2}c^{-\epsilon_2}a^{-\epsilon_1})&=&\overline{\psi}(a^{\epsilon_1})\overline{\psi}(b^{\epsilon_2})\overline{\psi}(c^{-\epsilon_2})\overline{\psi}(a^{-\epsilon_1})\nonumber\\
&=&\psi(a)^{\epsilon_1}\psi(b)^{\epsilon_2}\psi(c)^{-\epsilon_2}\psi(a)^{-\epsilon_1}\nonumber\\
&=&(a^{\epsilon_1}, 0)(e, \epsilon_2(b, e))(e, -\epsilon_2(c, e))(a^{-\epsilon_1}, 0)\nonumber\\
&=& (a^{\epsilon_1}, \epsilon_2(b, a^{\epsilon_1}))(a^{-\epsilon_1}, -\epsilon_2(c, e))\nonumber\\
&=&(e, \epsilon_2(b, a^{\epsilon_1})-\epsilon_2(c, a^{\epsilon_1})),\nonumber
\end{eqnarray}
hence $f(a^{\epsilon_1}b^{\epsilon_2}c^{-\epsilon_2}a^{-\epsilon_1})=\pi\circ \overline{\psi}(a^{\epsilon_1}b^{\epsilon_2}c^{-\epsilon_2}a^{-\epsilon_1})=\epsilon_2(b, a^{\epsilon_1})-\epsilon_2(c, a^{\epsilon_1})\in A_{2}(C\times F(D))$. By the arbitrary, it follows that $\overline{f(A_{4})}\subset A_{2}(C\times F(D))$. Since $C\times F(D)$ is a metrizable space, it follows from \cite[Proposition 4.8]{Y1998} that $A_{2}(C\times F(D))$ is a Fr\'{e}chet-Urysohn space. Then there exists a sequence $\mathcal{S}=\{\epsilon_2(k)(b_{k}, a_{k}^{\epsilon_1(k)})-\epsilon_2(k)(c_{k}, a_{k}^{\epsilon_1(k)}): k\in\mathbb{N}\}$ which converges to $0$, where $\epsilon_1(k), \epsilon_2(k)\in\{1, -1\}$ for each $k\in\mathbb{N}$. Then $F=\overline{\mbox{supp}(\mathcal{S}\cup\{0\})}$ is a compact subset in $C\times F(D)$. Since the projective mappings $\pi_{1}$ and $\pi_{2}$ are continuous from $C\times F(D)$ to $C$ and $F(D)$ respectively, the sets $\pi_{1}(F)$ and $\pi_{2}(F)$ are compact in $C$ and $F(D)$ respectively. Then the set $F_{1}=\{a_{k}^{\epsilon_1(k)}: k\in\mathbb{N}\}$ is a finite set since $F(D)$ is discrete. Therefore, the set $F_{2}=\{b_{k}: k\in\mathbb{N}\}$ is also finite since $\{b_{k}: k\in\mathbb{N}\}\cap V_{n_{0}}=\emptyset$, where $n_{0}=\max\{n_{a}: a\in F_{1}\}$. Therefore, there exist $a\in F_{1}$ and $b\in F_{2}$ such that some subsequence $\{\epsilon_2(n_{k})(b, a)-\epsilon_2(n_{k})(c_{n_{k}}, a): k\in\mathbb{N}\}$ of $\mathcal{S}$ converges to $0$. Without loss of generality, we may assume that $(b, a)-(c_{n_{k}}, a)\rightarrow 0$ as $k\rightarrow\infty$, that is, $(c_{n_{k}}, a)\rightarrow (b, a)$ as $k\rightarrow\infty$. Then $c_{n_{k}}=b$ for any sufficiently large $k$, which is a contradiction.

\smallskip
Then $a^{\epsilon_1}b_i^{\epsilon_2}c_i^{-\epsilon_2}a^{-\epsilon_1}\to
a^{\epsilon_1}b^{\epsilon_2}c^{-\epsilon_2}a^{-\epsilon_1}$ as
$i\rightarrow\infty$, where $b, c\in K$. In viewing of the proof of
Subcase 1.1, we could prove that $b=c$, hence
$a^{\epsilon_1}b_i^{\epsilon_2}c_i^{-\epsilon_2}a^{-\epsilon_1}\to
a^{\epsilon_1}b^{\epsilon_2}b^{-\epsilon_2}a^{-\epsilon_1}=e$ as
$i\rightarrow\infty$.
\end{proof}

For each $g=a_{1}\ldots a_{n}\in F_{n}(X)\setminus F_{n-1}(X)$ with
$a_{1}, \ldots, a_{n}\in X\cup X^{-1}$, denote by $\mathcal{P}_X(g)$
the subfamily of $P(X)$ consisting of all $\varrho$ such that
$\varrho^*(a_{i}^{-1}, a_{i+1})\geq 1$ for each $i<n$. For each
$\varrho\in\mathcal{P}_X(g)$, put $U_{\varrho}(g)=\{x_{1}\ldots
x_{i}y^{\epsilon}z^{-\epsilon}x_{i+1}\ldots x_{n}: x_{1}, \ldots,
x_{n}\in X\cup X^{-1}, y, z\in X, \epsilon=\pm 1, 0\leq i\leq n,\
\mbox{and}\ \varrho(y, z)+\sum_{k=1}^{n}\varrho(a_{k}, x_{k})<1\}$.

\begin{lemma}\label{th}
\cite[Theorem 7.2.11]{AT2008}. The family $\{U_{\varrho}(g):
\varrho\in \mathcal{P}_X(g)\}$ is an open base for $F_{n+2}(X)$ at
any point $g\in F_n(X)\setminus F_{n-1}(X)$.
\end{lemma}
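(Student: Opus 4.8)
The plan is to reduce the whole statement to a single identity: for every $\varrho\in\mathcal{P}_X(g)$,
\[
U_{\varrho}(g)=gV_{\varrho}\cap F_{n+2}(X),
\]
where $V_{\sigma}:=\{h\in F(X):N_{\sigma}(h)<1\}$. Granting this, the lemma follows from the general theory. By \cite[Theorem 7.2.2]{AT2008} the sets $\{V_{\sigma}:\sigma\in P(X)\}$ form a neighbourhood base at $e$ in $F(X)$, so, since left translation is a homeomorphism of the topological group $F(X)$, the family $\{gV_{\sigma}\cap F_{n+2}(X):\sigma\in P(X)\}$ is a neighbourhood base at $g$ in the subspace $F_{n+2}(X)$. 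The identity then exhibits each $U_{\varrho}(g)$ as $gV_{\varrho}\cap F_{n+2}(X)$, which is open in $F_{n+2}(X)$ (as $N_{\varrho}$ is a continuous prenorm, $V_{\varrho}$ is open) and contains $g$ (take $x_k=a_k$ and $y=z$, so the inserted pair is $e$). Conversely, given a neighbourhood $O$ of $g$, choose $\sigma$ with $gV_{\sigma}\cap F_{n+2}(X)\subseteq O$ and set $\varrho=\max(\sigma,\tau)$, where $\tau$ is a continuous pseudometric forcing $\varrho^{*}(a_i^{-1},a_{i+1})\geq 1$ on the finitely many consecutive pairs of $g$ (so that $\varrho\in\mathcal{P}_X(g)$); then $\varrho\geq\sigma$ gives $\varrho^{*}\geq\sigma^{*}$, hence $N_{\varrho}\geq N_{\sigma}$ and $V_{\varrho}\subseteq V_{\sigma}$, so $U_{\varrho}(g)=gV_{\varrho}\cap F_{n+2}(X)\subseteq O$.

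So everything rests on the displayed identity. The inclusion $U_{\varrho}(g)\subseteq gV_{\varrho}\cap F_{n+2}(X)$ is the easy, bookkeeping direction. Given $h=x_{1}\cdots x_{i}y^{\epsilon}z^{-\epsilon}x_{i+1}\cdots x_{n}\in U_{\varrho}(g)$, I would write out the word $g^{-1}h=a_{n}^{-1}\cdots a_{1}^{-1}x_{1}\cdots x_{i}y^{\epsilon}z^{-\epsilon}x_{i+1}\cdots x_{n}$ of length $2n+2$ and evaluate $\Gamma_{\varrho}$ on the nested scheme that pairs each $a_{k}^{-1}$ with its perturbed copy $x_{k}$ and pairs $y^{\epsilon}$ with $z^{-\epsilon}$. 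A direct computation with the extension rules for $\varrho^{*}$ collapses the factor $\tfrac12$ and yields $N_{\varrho}(g^{-1}h)\leq\Gamma_{\varrho}=\sum_{k=1}^{n}\varrho(a_{k},x_{k})+\varrho(y,z)<1$, so $h\in gV_{\varrho}$; and $\ell(h)\leq n+2$ gives $h\in F_{n+2}(X)$.

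The reverse inclusion $gV_{\varrho}\cap F_{n+2}(X)\subseteq U_{\varrho}(g)$ is where the separation hypothesis and the length bound do the real work, and it is the step I expect to be the main obstacle. Take $h\in F_{n+2}(X)$ with $N_{\varrho}(g^{-1}h)<1$; by Claim 1 in the proof of \cite[Theorem 7.2.2]{AT2008} there are an almost reduced word representing $g^{-1}h$ and a scheme $\varphi$ with $\Gamma_{\varrho}=N_{\varrho}(g^{-1}h)<1$. I would first argue that no letter $a_{k}^{-1}$ of the $g^{-1}$-block can be matched with the auxiliary letter $e$, nor with another letter of that same block: either matching would incur, via $\varrho^{*}(a_k,e)\geq 1$ or via the separation condition $\varrho^{*}(a_i^{-1},a_{i+1})\geq 1$ on consecutive letters of the reduced word $g$, a single pair of cost at least $1$, contradicting $\Gamma_{\varrho}<1$. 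Hence $\varphi$ matches the $n$ letters of the $g^{-1}$-block bijectively with $n$ letters of $h$, each matched pair of small $\varrho^{*}$-cost, identifying those letters of $h$ as a letter-by-letter $\varrho$-perturbation $x_{1},\dots,x_{n}$ of $a_{1},\dots,a_{n}$. Since $\ell(h)\leq n+2$, at most two letters of $h$ remain unmatched to the block; being matched among themselves at small cost they form exactly one cancelling pair $y^{\epsilon}z^{-\epsilon}$ inserted at some position $i$, and reading off the total cost as $\sum_{k}\varrho(a_k,x_k)+\varrho(y,z)<1$ places $h$ in $U_{\varrho}(g)$.

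The delicate points inside this last step — and the reason I flag it as the crux — are that one works with almost reduced, not reduced, representatives, and that a scheme may in principle cross between the $g^{-1}$-block and the inserted letters; the separation condition must be leveraged to exclude every such economical but structurally wrong matching, so that the optimal scheme is forced into the canonical ``identity-plus-one-insertion'' shape. It is precisely the bound $\ell(h)\leq n+2$ that limits the surplus to a single inserted pair, which is exactly why the family $\{U_{\varrho}(g)\}$ describes the topology of $F_{n+2}(X)$ at $g$, rather than that of $F_{m}(X)$ for some other $m$.
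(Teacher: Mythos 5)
There is a genuine gap, and it lies at the foundation of your reduction rather than in the combinatorial step you yourself flagged as the crux. (For the record, the paper offers no proof of this lemma at all: it is quoted as \cite[Theorem 7.2.11]{AT2008}, so your argument has to stand on its own.) The fatal step is the appeal to \cite[Theorem 7.2.2]{AT2008}: that theorem says only that each Graev prenorm $N_{\sigma}$ is continuous, i.e.\ that each $V_{\sigma}=\{h\in F(X):N_{\sigma}(h)<1\}$ is an open neighbourhood of $e$; it does \emph{not} say that $\{V_{\sigma}:\sigma\in P(X)\}$ is a neighbourhood base at $e$ in $F(X)$, and that statement is in fact false for every non-discrete space relevant to this paper. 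Indeed, each $N_{\sigma}$ is conjugation-invariant (given a word and scheme for $h$, enclose them in the letters of $w$ and $w^{-1}$ paired with each other at zero cost, so $N_\sigma(whw^{-1})=N_\sigma(h)$), hence every $V_{\sigma}$ is an invariant neighbourhood, and if these formed a base then $F(X)$ would be a balanced (SIN) group. It is not: for example, if $X=\{p\}\cup\{s_k:k\in\mathbb{N}\}$ is a convergent sequence, map $X$ continuously into the affine group of the line by $\phi(p)=\mathrm{id}$, $\phi(s_1)=(t\mapsto 2t)$ and $\phi(s_k)=(t\mapsto t+1/k)$ for $k\geq 2$; the induced continuous homomorphism $\Phi:F(X)\to\mathrm{Aff}(\mathbb{R})$ sends $s_1^{m}(s_js_k^{-1})s_1^{-m}$ to the translation by $2^{m}(1/j-1/k)$, so conjugates of elements arbitrarily close to $e$ leave the fixed neighbourhood $\Phi^{-1}(\{|b|<1\})$, while $N_{\sigma}(s_1^{m}s_js_k^{-1}s_1^{-m})\leq\sigma(s_j,s_k)$ is small for \emph{every} continuous pseudometric $\sigma$. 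Thus the family $\{V_{\sigma}\}$ generates a strictly coarser (invariant) topology than the free topology; the corresponding global base statement is a theorem for the free \emph{abelian} group $A(X)$, which may be the source of the confusion, but for $F(X)$ it survives only in the localized form restricted to $F_{n+2}(X)$ --- which is precisely the lemma you are trying to prove.

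Consequently the argument is circular. Granting your identity $U_{\varrho}(g)=gV_{\varrho}\cap F_{n+2}(X)$, the statement you actually need in the converse direction --- that every neighbourhood of $g$ in $F_{n+2}(X)$ contains some $gV_{\sigma}\cap F_{n+2}(X)$ --- is, by that very identity, equivalent to the lemma itself, and nothing in your proposal establishes it: the length bound $\ell(h)\leq n+2$, which is exactly what makes the localized statement true, enters your argument only in the set-theoretic identity, never in the base property. A correct proof must start from a genuine description of the neighbourhoods of $e$ in $F(X)$ (which involves sequences of pseudometrics together with conjugations, not a single invariant prenorm) and show that its trace on $F_{n+2}(X)$ near $g$ is captured by a single $\varrho\in\mathcal{P}_X(g)$; this is the substance of Arhangel'skii--Tkachenko's Theorem 7.2.11. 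Separately, and less importantly, your reverse inclusion also glosses over cancellation: the optimal almost reduced word supplied by Claim~1 of the proof of \cite[Theorem 7.2.2]{AT2008} is built from the letters of the \emph{reduced form} of $g^{-1}h$ (plus $e$), and when $h$ shares initial letters with $g$ this word need not contain the block $a_n^{-1}\cdots a_1^{-1}$ at all, so the ``$g^{-1}$-block'' your matching argument manipulates may not exist.
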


\begin{lemma}\label{l6}
Let $B\subset F_{4}(X)\setminus F_{3}(X).$ If $g=a_1a_2\in
\overline{B}$ is a reduced form, where $a_1, a_2\in X\cup X^{-1}$, then there is a
convergent sequence
$\{h_n=x_n^{\epsilon_1}y_n^{\epsilon_2}z_n^{\epsilon_3}t_n^{\epsilon_4}: h_{n}\in B\}$
such that $h_n\to g$ as $n\rightarrow\infty$.
\end{lemma}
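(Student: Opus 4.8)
The plan is to exploit the neighborhood base for $F_4(X)$ at the length-two point $g=a_1a_2$ furnished by Lemma~\ref{th}, and then to force the extra cancelling pair that appears in such neighborhoods to collapse to the neutral element by means of Lemma~\ref{l2}(2). I would first note that $g=a_1a_2\in F_2(X)\setminus F_1(X)$, so by Lemma~\ref{th} the family $\{U_\varrho(g):\varrho\in\mathcal{P}_X(g)\}$ is an open base at $g$ in $F_4(X)$. Since $g$ is reduced, the base points of $a_1$ and $a_2$ differ when these letters have opposite signs, so for a large constant $\lambda$ the scaled metric $\lambda\varrho$ still belongs to $\mathcal{P}_X(g)$; choosing $\lambda_n\to\infty$ I obtain a shrinking sequence of basic neighborhoods $U_{\varrho_n}(g)$ with $\varrho_n=\lambda_n\varrho$, inside which the defining inequality reads $\varrho(y,z)+\sum_{k}\varrho(a_k,x_k)<1/\lambda_n$. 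Because $g\in\overline{B}$ and $B\subset F_4(X)$, each $U_{\varrho_n}(g)$ meets $B$, so I may pick $h_n\in B\cap U_{\varrho_n}(g)$.

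Next I would read off the structure of $h_n$. By the definition of $U_{\varrho_n}(g)$, the element $h_n$ is obtained from $a_1a_2$ by replacing $a_1,a_2$ with letters $x_1^{(n)},x_2^{(n)}$ of the same respective signs and inserting a pair $y_n^{\epsilon}z_n^{-\epsilon}$ at some position $i_n\in\{0,1,2\}$, subject to $\varrho(y_n,z_n)+\varrho(a_1,x_1^{(n)})+\varrho(a_2,x_2^{(n)})<1/\lambda_n$. Passing to a subsequence I may assume $i_n$ and $\epsilon$ are constant. Since $h_n\in B\subset F_4(X)\setminus F_3(X)$, this displayed word is reduced of length $4$, so after relabelling it has exactly the required form $h_n=x_n^{\epsilon_1}y_n^{\epsilon_2}z_n^{\epsilon_3}t_n^{\epsilon_4}$.

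Finally I would establish convergence. From $\varrho(a_k,x_k^{(n)})\to 0$ I get $x_1^{(n)}\to a_1$ and $x_2^{(n)}\to a_2$ in $X\cup X^{-1}$. From $\varrho(y_n,z_n)<1/\lambda_n\to 0$, Lemma~\ref{l2}(2) yields a further subsequence along which $y_n\to x_0$ and $z_n\to x_0$ for a single point $x_0\in K$, so the inserted pair satisfies $y_n^{\epsilon}z_n^{-\epsilon}\to x_0^{\epsilon}x_0^{-\epsilon}=e$. Since multiplication in $F(X)$ is continuous, $h_n$ converges to the word obtained from $a_1a_2$ by inserting $e$ at position $i$, which reduces to $g$; as $F_4(X)$ carries the subspace topology, $h_n\to g$ in $F_4(X)$. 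Relabelling this subsequence as $\{h_n\}$ completes the argument.

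The main obstacle is the structural step: one must correctly extract, from membership in the basic neighborhood $U_{\varrho_n}(g)$, that every nearby element of $B$ is literally $a_1a_2$ with its two letters perturbed and one cancelling pair inserted, keeping track of signs and of the insertion position, and then recognise that only the freedom to pass to subsequences (legitimate, since a single convergent sequence suffices) lets Lemma~\ref{l2}(2) turn the $\varrho$-small pair $\{y_n,z_n\}$ into a genuine pair converging to a common point of $K$. Once this collapse is secured, the convergence $h_n\to g$ is a direct consequence of the continuity of the group operations.
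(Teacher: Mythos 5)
Your proposal is correct and follows essentially the same route as the paper's own proof: scale the metric $\varrho$ of Lemma~\ref{l1} so that $\varrho_n=\lambda_n\varrho\in\mathcal{P}_X(g)$, pick $h_n\in B\cap U_{\varrho_n}(g)$ via Lemma~\ref{th}, and use Lemma~\ref{l2}(2) to collapse the inserted pair $y_n^{\epsilon}z_n^{-\epsilon}$ to a common limit in $K$, so that continuity of multiplication gives $h_n\to g$. Your treatment is in fact slightly more careful than the paper's (you fix the insertion position and sign by passing to subsequences, and you verify $y_n\neq z_n$ from reducedness before invoking Lemma~\ref{l2}(2)), but the ideas are identical.
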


\begin{proof}
Let $\varrho$ be the metric on $X$ in Lemma~\ref{l1}. If $\varrho^*(a_1^{-1}, a_2)\geq 1$, then $\varrho\in
\mathcal{P}_X(g)$; if $\varrho^*(a_1^{-1}, a_2)=r<1$, then
$(1/r)\varrho\in \mathcal{P}_X(g)$. We may find a $k\in \mathbb{N}$
such that $k>1/r$, if $r\geq 1$, then $k=1$.

Let $\varrho_n=nk\cdot \varrho$ for each $n\in \mathbb{N}$. By Lemma
~\ref{th}, $U_{\varrho_n}(g)\cap B\neq \emptyset$ for each
$n\in\mathbb{N}$, thus without loss of generality it can take an
arbitrary point $x_1(n)y^{\epsilon}(n)z^{-\epsilon}(n)x_2(n)\in
U_{\varrho_n}(g)\cap B$. Then $n\cdot \varrho(y(n), z(n))<1$,
$n\cdot \varrho(x_1(n), a_1)<1$ and $n\cdot \varrho(x_2(n), a_2)<1$.
By Lemma~\ref{l2}, $y(n)\to f, z(n)\to f$, $x_i(n)\to a_i (i\leq 2)$
as $n\rightarrow\infty$. Therefore,
$x_1(n)y^{\epsilon}(n)z^{-\epsilon}(n)x_2(n)\to
a_1ff^{-1}a_2=a_1a_2=g$.
\end{proof}

Now, we can give an affirmative answer to Yamada's
Conjecture\footnote{Professor K. Yamada informed us that he just
proved this conjecture independently.}.

\begin{theorem}
Let $X$ be a metrizable space in which the set of all the
non-isolated points is compact. Then $F_4(X)$ is Fr\'echet-Urysohn.
\end{theorem}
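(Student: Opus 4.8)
The plan is to verify the Fréchet–Urysohn property straight from the definition by stratifying $F_4(X)$ according to reduced length and reducing each stratum to something already available. Fix a subset $A\subseteq F_4(X)$ and a point $g\in\overline{A}$; the goal is to produce a sequence in $A$ converging to $g$. Since $\overline{A}=\overline{A\cap F_3(X)}\cup\overline{A\setminus F_3(X)}$, either $g\in\overline{A\cap F_3(X)}$ or $g\in\overline{A\setminus F_3(X)}$. In the first case I would invoke that $F_3(X)$ is closed in $F_4(X)$ and that, because $K$ is compact, $F_3(X)$ is Fréchet–Urysohn by Theorem~\ref{t1}; this immediately yields a sequence in $A\cap F_3(X)\subseteq A$ converging to $g$. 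So the entire difficulty is concentrated in the second case, where I set $B:=A\setminus F_3(X)\subseteq F_4(X)\setminus F_3(X)$ and must find a sequence in $B$ converging to $g\in\overline{B}$.

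To organize this case I would use the continuous homomorphism $\phi\colon F(X)\to\mathbb{Z}$ sending each generator to $1$ (the exponent sum). Every element of $B$ is a reduced word of length $4$, so its exponent sum is even; since $\phi$ is continuous and $\mathbb{Z}$ is discrete, $\phi(g)\in\phi(B)$ is even as well, and because reduced length and exponent sum have the same parity, $g$ must have reduced length $0$, $2$, or $4$. The odd-length possibilities $g\in F_1(X)\setminus F_0(X)$ and $g\in F_3(X)\setminus F_2(X)$ are thereby excluded, which is the precise purpose of the parity observation and is what splits the problem into exactly the three cases handled by the tools at hand.

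I would then dispatch the three cases as follows. If $g=e$, partitioning $B$ by the finitely many values of $\phi$ and using $\phi(\overline{B_j})=\{j\}$ shows that $e\in\overline{B_0}$, where $B_0$ is the set of exponent-sum-zero length-$4$ words in $B$; applying Lemma~\ref{l5} to $B_0$ produces a sequence in $B_0\subseteq A$ converging to $e$. If $g$ has reduced length $2$, so that $g=a_1a_2$ in reduced form with $a_1,a_2\in X\cup X^{-1}$, Lemma~\ref{l6} applied directly to $B$ gives the required sequence. If $g$ has reduced length $4$, i.e.\ $g\in F_4(X)\setminus F_3(X)$, I would use that this top stratum is open in $F_4(X)$ and, for metrizable $X$, metrizable (being homeomorphic to the open subspace of $(X\oplus X^{-1})^4$ cut out by the reducedness conditions $a_i\neq a_{i+1}^{-1}$); hence it is Fréchet–Urysohn, and since $g\in\mathrm{cl}_{F_4(X)\setminus F_3(X)}(B)$ we again obtain a convergent sequence in $B\subseteq A$. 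Collecting the three cases proves that $F_4(X)$ is Fréchet–Urysohn.

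I expect the main obstacle not to lie in this assembly—once Lemmas~\ref{l5} and~\ref{l6} are granted, the theorem is essentially bookkeeping—but rather in the content those lemmas encapsulate, above all the $g=e$ case of Lemma~\ref{l5}, whose proof reduces matters to the Fréchet–Urysohn property of $A_2(C\times F(D))$. Within the theorem proper, the two steps that genuinely require care are the parity argument that eliminates the odd-length strata and the correct transfer of closures between the closed stratum $F_3(X)$ and the open, metrizable stratum $F_4(X)\setminus F_3(X)$; getting those right is exactly what makes the lemmas combine into the full statement.
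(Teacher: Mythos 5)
Your proposal is correct and takes essentially the same route as the paper: split off $A\cap F_3(X)$ (handled by Theorem~\ref{t1} since $F_3(X)$ is closed), then reduce the remaining case by parity to $g=e$, $g$ of length $2$, or $g$ of length $4$, handled respectively by Lemma~\ref{l5}, Lemma~\ref{l6}, and the metrizability of the open stratum $F_4(X)\setminus F_3(X)$. Your explicit exponent-sum homomorphism argument is simply a derivation of the clopen even/odd decomposition of $F_4(X)$ that the paper invokes as well known, so the two proofs coincide in substance.
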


\begin{proof}
Assume that $g\in \overline{B}$, where $B\subset F_4(X)$ and $g\in
F_4(X)$. Next we prove that there exists a sequence in $B$ which
converges to $g$. It is well-known that $$F_4(X)=[(F_{4}(X)\setminus
F_{3}(X))\cup (F_{2}(X)\setminus F_{1}(X))\cup\{e\}]\oplus
[(F_{3}(X)\setminus F_{2}(X))\cup (F_{1}(X)\setminus \{e\})].$$

If $g\in \overline{F_3(X)\cap B}$, we can find a sequence in $B$
converging to $g$ since $F_3(X)$ is Fr\'echet-Urysohn by
Theorem~\ref{t1}.

If $g\in \overline{(F_4(X)\setminus F_3(X))\cap B}$, we consider the
following cases:

(a) $g\in F_4(X)\setminus F_3(X)$. By \cite[Theorem 7.6.2]{AT2008},
$F_{4}(X)\setminus F_{3}(X)$ is metrizable, we can find a sequence
in $B$ converges to $g$.

(b) $g\in F_2(X)\setminus F_1(X)$. By Lemma ~\ref{l6},we can find a
sequence in $B$ converges to $g$.

(c) $g=e$. By Lemma ~\ref{l5}, there is a sequence in $B$ converging
to $e$.

Therefore $F_4(X)$ is Fr\'echet-Urysohn.

\end{proof}

{\bf Acknowledgements}. The authors are thankful to the
referee for valuable remarks and corrections and all other sort of help related to the content of this article.

\end{document}